\documentclass[11pt,a4paper]{article}

\usepackage[margin=2.5cm]{geometry}
\usepackage{amsmath,amssymb,amsthm}
\usepackage{aliascnt}
\usepackage{mathrsfs}
\usepackage{enumitem}
\usepackage{booktabs}
\usepackage[hidelinks]{hyperref}
\usepackage{cleveref}
\newtheorem{theorem}{Theorem}[section]
\newaliascnt{lemma}{theorem}
\newtheorem{lemma}[lemma]{Lemma}
\aliascntresetthe{lemma}
\newaliascnt{proposition}{theorem}
\newtheorem{proposition}[proposition]{Proposition}
\aliascntresetthe{proposition}
\newaliascnt{corollary}{theorem}
\newtheorem{corollary}[corollary]{Corollary}
\aliascntresetthe{corollary}
\theoremstyle{definition}
\newaliascnt{definition}{theorem}

\aliascntresetthe{definition}
\theoremstyle{remark}
\newaliascnt{remark}{theorem}
\newtheorem{remark}[remark]{Remark}
\aliascntresetthe{remark}

\crefname{theorem}{Theorem}{Theorems}
\Crefname{theorem}{Theorem}{Theorems}
\crefname{lemma}{Lemma}{Lemmas}
\Crefname{lemma}{Lemma}{Lemmas}
\crefname{proposition}{Proposition}{Propositions}
\Crefname{proposition}{Proposition}{Propositions}
\crefname{corollary}{Corollary}{Corollaries}
\Crefname{corollary}{Corollary}{Corollaries}
\crefname{definition}{Definition}{Definitions}
\Crefname{definition}{Definition}{Definitions}
\crefname{remark}{Remark}{Remarks}
\Crefname{remark}{Remark}{Remarks}
\crefname{equation}{equation}{equations}
\Crefname{equation}{Equation}{Equations}

\newcommand{\NN}{\mathbb{N}}
\newcommand{\EE}{\mathbb{E}}
\newcommand{\ind}{\mathbf{1}}

\title{Universal Fourier-inertia bounds for prescribed even distances}
\author{
Xiaochen Zhao\thanks{School of Mathematical Sciences, Capital Normal University, Beijing, China. Email: 2250501013@cnu.edu.cn.}
\and
Gennian Ge\thanks{School of Mathematical Sciences, Capital Normal University, Beijing, China. Email: gnge@zju.edu.cn. Gennian Ge is supported by the National Key Research and Development Program of China under Grant 2025YFC3409900, the National Natural Science Foundation of China under Grant 12231014, and Beijing Scholars Program.}
}
\date{}

\begin{document}
\maketitle

\begin{abstract}
The study of set families with restricted Hamming distances is  a classical topic of extremal combinatorics and coding theory. Let \(H=\{A\subseteq[n]: |A|\text{ is even}\}\) be the even subcube. Let \(\ell_1,\dots,\ell_t\) be distinct positive integers and set
\(\mathcal L=\{2\ell_1,\dots,2\ell_t\}\). We prove that, for all sufficiently large \(n\), every family \(\mathcal F\subseteq H\) satisfying
\(
|A\triangle B|\in\mathcal L
\)
for all \(A\ne B\in\mathcal F\)
has
\[
|\mathcal F|\le \sum_{i=0}^{t}\binom{n-1}{i}.
\]
This is best possible as a universal bound, with equality attained at the distance set \(\mathcal L=\{2,4,\dots,2t\}\).

Our proof uses a Fourier-inertia argument based on a universal low/high boundary-layer sign pattern for the Fourier transform of the distance-polynomial kernel on the even subcube: the prescribed distances enter only through lower-order Fourier terms, while the leading boundary-layer signs depend solely on \(t\). This even-subcube result immediately yields an odd-subcube analogue and, through parity reductions, provides bounds for arbitrary distance sets. In particular, this approach recovers the classical interval bounds of Kleitman and the corresponding interval bounds of Huang--Klurman--Pohoata, while offering a direct spectral proof of the maximality of \(\{2,4,\dots,2t\}\) among all fixed \(t\)-distance sets.
\end{abstract}

\section{Introduction}
The extremal theory of restricted distances in the hypercube has developed through a variety of algebraic, spectral, and intersection-theoretic methods. Kleitman's classical isodiametric theorem~\cite{Kleitman} determines the largest family of subsets of \([n]\) with bounded diameter. Huang--Klurman--Pohoata~\cite{HKP} gave an elegant algebraic proof of Kleitman's theorem using pseudo-adjacency matrices and Cvetkovi\'c's inertia bound, and extended this method to consecutive distance intervals. Dong--Gao--Liu--Ouyang--Zhou~\cite{DGLOZ} developed a restricted-intersection approach to arithmetic-progression distance sets and binary \(t\)-distance problems. In general, the binary \(t\)-distance problem asks for the maximum size of a set family whose pairwise distances are restricted to a prescribed set of \(t\) positive integers, and an important problem is to determine which \(t\)-element distance sets achieve this maximum.

A natural setting for exploring these restricted distance problems is the even subcube, which captures the parity-preserving properties of distance-constrained families. Indeed, the even subcube is a subgroup under symmetric difference, and hence the distance between any two of its elements is even. To set up the model, we identify subsets of \([n]=\{1,\dots,n\}\) with their indicator vectors in the Boolean cube, and write
\[
H=\{A\subseteq[n]: |A|\text{ is even}\}
\]
for the even subcube. Let \(\ell_1,\dots,\ell_t\) be distinct positive integers and set
\(\mathcal L=\{2\ell_1,\dots,2\ell_t\}\).
Let \(\Gamma=\Gamma(n,\mathcal L)\) be the graph on the vertex set \(H\) in which two distinct vertices \(A,B\in H\) are adjacent if and only if
\(
|A\triangle B|\notin\mathcal L.
\)
Thus, independent sets in \(\Gamma\) correspond precisely to families \(\mathcal F\subseteq H\) satisfying
\(
|A\triangle B|\in\mathcal L
\)
for all \(A\ne B\in\mathcal F\).

Our main result establishes a universal upper bound that holds uniformly over all such distance sets.

\begin{theorem}[Universal upper bound for fixed even distance sets]\label{thm:main}
Let \(\ell_1,\dots,\ell_t\) be distinct positive integers and set
\(\mathcal L=\{2\ell_1,\dots,2\ell_t\}\).
Then there exists \(n_0=n_0(\mathcal L)\) such that, for every \(n\ge n_0\), every family \(\mathcal F\subseteq H\) satisfying
\(
|A\triangle B|\in\mathcal L
\)
for all \(A\ne B\in\mathcal F\)
has
\[
|\mathcal F|\le \sum_{i=0}^{t}\binom{n-1}{i}.
\]
Moreover, for \(\mathcal L=\{2,4,\dots,2t\}\), equality is attained.
\end{theorem}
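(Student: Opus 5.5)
We will use Cvetković's inertia bound: if \(M\) is a real symmetric matrix indexed by \(H\) with \(M_{AB}=0\) whenever \(A\ne B\) and \(A,B\) are non-adjacent in \(\Gamma\) (so \(M\) is a weighted pseudo-adjacency matrix for the complement, supported on pairs at distances in \(\mathcal L\)), then every independent set of \(\Gamma\) has size at most the number of nonnegative (respectively nonpositive) eigenvalues of \(M\). Since \(H\cong\mathbb F_2^{n-1}\) is an abelian group and we take \(M\) translation-invariant, \(M_{AB}=f(|A\triangle B|)\), the eigenvectors are characters and the eigenvalues are Fourier coefficients of \(f\). We work with the distance-polynomial kernel
\[
f(d)=\prod_{j=1}^{t}(d-2\ell_j)\cdot g(d),
\]
or more precisely a kernel vanishing at every even \(d\notin\mathcal L\cup\{0\}\). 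Concretely, we choose \(M\) with \(M_{AA}=c\) and \(M_{AB}=w_j\) for \(|A\triangle B|=2\ell_j\), zero otherwise, so that its eigenvalues are
\[
\lambda(S)=c+\sum_{j=1}^t w_j\,\bigl(K_{2\ell_j}(|S|)+K_{2\ell_j}(n-|S|)\bigr)/2 ,
\]
a combination of Krawtchouk polynomials in the character weight \(k=|S|\), where characters of \(H\) are indexed by \(S\) modulo complementation, i.e.\ \(k\) and \(n-k\) are identified. We aim to pick \(c,w_j\) so that \(\lambda\) has one fixed sign (say negative) on the characters with \(\min(k,n-k)\ge t+1\), using only \(\sum_{i\le t}\binom{n-1}{i}\) characters of the other sign; the count of characters of \(H\) with \(\min(k,n-k)\le t\) is \(\sum_{i=0}^{t}\binom{n-1}{i}\), by pairing each \(S\) with its complement.

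The key steps are as follows. \textbf{(1)} Compute asymptotics: for fixed \(\ell\) and \(k\le\epsilon n\), \(K_{2\ell}(k)\) is a polynomial in \(k\) with leading behaviour \(\binom{n}{2\ell}\bigl(1-2k/n\bigr)^{2\ell}\)-type expansions, giving a ``low boundary layer'' \(k=O(1)\) where \(\lambda\) is essentially a polynomial of degree \(t\) in \(k\) times \(n^{2\ell_{\max}}\), plus a ``bulk'' \(\epsilon n\le k\le n/2\) where \(|K_{2\ell}(k)|\) is much smaller than \(\binom{n}{2\ell}\). \textbf{(2)} Choose the weights so that, after normalising by the top-degree term, the low-layer eigenvalue is \(P(k)=\prod_{i=0}^{t}(\,i-k\,)\)-like (up to sign), i.e.\ a degree-\((t+1)\) pattern in \(k\) whose sign is determined only by \(t\): positive for \(k\le t\) and negative for \(t<k\le Cn\) for some small constant. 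The prescribed \(\ell_j\) only affect lower-order corrections in \(1/n\), which is the ``universal sign pattern'' of the abstract. Equivalently, we take \(f\) to be the polynomial whose Fourier transform on the low layer interpolates the sign pattern, solving a Vandermonde-type system in the \(\ell_j\) (invertible since the \(\ell_j\) are distinct). \textbf{(3)} Control the middle range \(Cn\le k\le n/2\) by standard Krawtchouk bounds, \(|K_{m}(k)|\le \binom{n}{m}(1-2k/n)^{m}+\dots\), or by \(L^2\) (Parseval) estimates, choosing \(c\) so that the diagonal shift makes these eigenvalues negative. \textbf{(4)} Conclude by the inertia bound that \(|\mathcal F|\le\#\{S:\lambda(S)\ge0\}\le\sum_{i\le t}\binom{n-1}{i}\).

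For equality with \(\mathcal L=\{2,\dots,2t\}\), take \(\mathcal F=\{A\in H:\ A\triangle\{n\}\cdot[\,|A\cap[n-1]|\text{ odd}\,]\text{ restricted to }[n-1]\text{ has size}\le t\}\): for each \(B\subseteq[n-1]\) with \(|B|\le t\), add \(n\) if needed to make it even. Two such sets differ in at most \(2t\) coordinates of \([n-1]\) plus possibly \(n\), so their distance is even and at most \(2t+1\), hence at most \(2t\). This gives \(\sum_{i\le t}\binom{n-1}{i}\) sets.

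The main obstacle is step (2)–(3): making the sign pattern hold uniformly, in particular matching the low layer (where the degree-\(t+1\) polynomial must be exactly nonnegative on \(k\le t\) and strictly negative for all \(k\in[t+1,Cn]\), including transition \(k\approx\sqrt n\) where the lower-order \(\ell_j\)-dependent terms compete), and gluing to the bulk estimate. I would first try normalising by \(\binom{n}{2\ell_{\max}}\) and writing \(\lambda\) as a polynomial in \(x=k/n\) and \(k\), bounding error terms by \(O(k^{t+2}/n)\), with \(n_0(\mathcal L)\) absorbing constants.
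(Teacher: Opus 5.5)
\textbf{The matrix has the wrong support.} A family with all distances in $\mathcal L$ is an independent set of $\Gamma$. So the inertia bound needs a matrix with $M_{AA}=0$ and $M_{AB}=0$ whenever $|A\triangle B|\in\mathcal L$.

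Your concrete choice is $M_{AA}=c$ and $M_{AB}=w_j$ exactly when $|A\triangle B|=2\ell_j$, zero otherwise. This is supported on precisely the distances that occur inside $\mathcal F$. Hence $M[\mathcal F,\mathcal F]=cI+\sum_j w_j(\text{distance-}2\ell_j\text{ adjacency of }\mathcal F)$ is not zero, and interlacing says nothing about $|\mathcal F|$. Your parenthetical ``supported on pairs at distances in $\mathcal L$'' contradicts the condition you state just before it. Also, $\prod_j(d-2\ell_j)$ vanishes on $\mathcal L$, not off it.

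The diagonal is not a free parameter either. Even where the off-diagonal entries vanish, $M_{AA}=c$ only lets interlacing count eigenvalues $\ge c$ and $\le c$. So you cannot tune $c$ to push the bulk eigenvalues negative.

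The missing idea is the paper's kernel: $M=F-f(0)I$ with $F(A,B)=f(|A\triangle B|)$ and $f(x)=\prod_j(x-2\ell_j)$. On the even subcube, $f(d(\boldsymbol z))=2^{-t}\prod_j\bigl(c_j-u(\boldsymbol z)\bigr)$ is a degree-$t$ polynomial in $u$ (coordinate sum plus full product). Consequently:
\begin{itemize}
\item its Fourier transform vanishes identically on $t<|T|<m-t+1$;
\item it has explicit, $\ell$-independent leading terms on the boundary layers;
\item the forced correction $-f(0)/2^{n-1}$ is exponentially small and by itself fixes the middle sign $(-1)^{t+1}$.
\end{itemize}
No Krawtchouk bulk estimates (your steps (1)--(3)) are needed.

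\textbf{The target sign pattern would not give the bound, even with a correct matrix, because the count is wrong.} The character classes of $H$ with $\min(k,n-k)=j$ number $\binom nj=\binom{n-1}{j}+\binom{n-1}{j-1}$. So the classes with $\min(k,n-k)\le t$ number $\sum_{j\le t}\binom nj$, not $\sum_{j\le t}\binom{n-1}{j}$. For $t=1$ this gives $n+1$ instead of the true value $n$. Making all boundary characters one sign and the bulk the other therefore loses $\sum_{j\le t-1}\binom{n-1}{j}$.

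What the paper actually proves is an \emph{alternating} pattern. In your indexing:
\begin{itemize}
\item the eigenvalue at $j=\min(k,n-k)\le t$ has sign $(-1)^j$, since the low layer $|T|=j$ and the high layer $|T|=m-j+1$ both give $(-1)^j$;
\item the bulk has sign $(-1)^{t+1}$.
\end{itemize}
The minority sign class is then $\{j\le t: j\equiv t \pmod 2\}$. By Pascal's rule it has size $\sum_{j\le t,\,j\equiv t\ (2)}\binom nj=\sum_{i\le t}\binom{n-1}{i}$. That is exactly the sharp bound.

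Your equality construction is correct: pad each $B\subseteq[n-1]$ with $|B|\le t$ by $n$ to make it even. The resulting distances are even and at most $2t+1$, hence lie in $\{2,\dots,2t\}$.
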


Our proof relies on a compact Fourier-inertia argument, the analytic core of which is the discovery of a universal low/high boundary-layer sign pattern for the distance-polynomial kernel on the even subcube. We identify \(H\) with the multiplicative group \(G=\{-1,1\}^{n-1}\). If we define the linear sum \(S=\sum_{i=1}^{n-1}z_i\) and the full product \(P=\prod_{i=1}^{n-1}z_i\), the corresponding even subset has Hamming distance
\(
d(\boldsymbol z)=(n-S-P)/2
\)
from the empty set. Consequently, the distance-polynomial kernel
\[
f_t(\boldsymbol z)=\prod_{j=1}^{t}\bigl(d(\boldsymbol z)-2\ell_j\bigr)
\]
can be compactly expressed in the form \(2^{-t}\prod_{j=1}^{t}\bigl(c_j-(S+P)\bigr)\), where \(c_j=n-4\ell_j\). On the Fourier characters \(\chi_T(\boldsymbol z)=\prod_{i\in T}z_i\) of \(G\), the terms \(S\) and \(P\) act as structured shift operators: multiplication by \(S\) moves frequencies between adjacent layers, while multiplication by \(P\) maps \(\chi_T\) directly to its complementary layer \(\chi_{T\triangle[n-1]}\). 
By a short induction, this algebraic mechanism reveals a stable low/high
boundary-layer profile: the distance parameters affect only lower-order
Fourier terms, while the leading signs are universal. Paired with an
appropriate diagonal correction, Cvetkovi\'c's inertia bound translates this
spectral localization directly into the extremal estimate.

This Fourier analysis can be viewed in a broader context. From the distance-regular graph viewpoint, the even subcube represents the vertex set of the halved cube, and its even Hamming-distance relations naturally form the halved-cube association scheme; see, for example,~\cite{BCN}. While our problem fits into the classical Delsarte association-scheme tradition for codes and designs~\cite{Delsarte, Delsarte-Goethals-Seidel}, the argument presented here does not rely on standard absolute-bound or rank-counting arguments. Instead, its distinctive feature is purely spectral, driving the optimal bound via the analytic behavior of the boundary-layer Fourier sign pattern.

Beyond establishing \Cref{thm:main}, this Fourier-inertia argument yields several immediate combinatorial consequences:
\begin{itemize}
    \item \textbf{Parity Extensions beyond the Even Subcube:} A parity-reversing involution \(A\mapsto A\triangle\{n\}\) yields an immediate odd-subcube analogue, while parity splitting extends the upper bounds to arbitrary (not necessarily even) distance sets.
    \item \textbf{Recovery of Classical Theorems:} In the interval case, this mechanism recovers Kleitman's classical isodiametric theorem with the sharp threshold discussed in Section~\ref{sec:thresholds} and the interval-distance bounds of Huang--Klurman--Pohoata.
    \item \textbf{Global Maximality of \(\{2,4,\dots,2t\}\):} For a fixed \(t\)-element distance set \(D\), let \(A(n,D)\) denote the maximum size of a family \(\mathcal{F} \subseteq 2^{[n]}\) whose pairwise distances lie in \(D\). Our argument provides an alternative spectral proof that the distance set \(D_0=\{2,4,\dots,2t\}\) globally maximizes \(A(n,D)\) among all fixed \(t\)-distance sets for sufficiently large \(n\).  This consequence stands complementary to the structure-sensitive results of Dong--Gao--Liu--Ouyang--Zhou~\cite{DGLOZ}; while their work addresses uniqueness and gap phenomena, our approach establishes the maximality statement via a different, analytic route.
\end{itemize}

The paper is organized as follows. Section~\ref{sec:setup} sets up the Fourier framework on the even subcube and realizes the pseudo-adjacency matrix as a convolution operator. Section~\ref{sec:fourier} proves the boundary-layer localization theorem by induction. Section~\ref{sec:inertia} applies the inertia bound and derives the extremal consequences. Section~\ref{sec:thresholds} concludes with a short discussion on effective thresholds.

\section{The even subcube and the Fourier setup}\label{sec:setup}

\paragraph{The group model.}
We identify the even subcube \(H\), viewed as an abelian group under the symmetric difference \(\triangle\), with the multiplicative group 
\[
G=\{-1,1\}^{n-1}.
\]
This identification is formalized as follows. For any vector \(\boldsymbol z=(z_1,\dots,z_{n-1})\in G\), we define its negative support by
\[
R(\boldsymbol z)=\{i\in[n-1]: z_i=-1\}.
\]
Then, \(\boldsymbol z\) corresponds uniquely to the even subset \(A(\boldsymbol z)\in H\) via the parity-matching rule:
\[
A(\boldsymbol z)=
\begin{cases}
R(\boldsymbol z), & |R(\boldsymbol z)|\text{ is even},\\
R(\boldsymbol z)\cup\{n\}, & |R(\boldsymbol z)|\text{ is odd}.
\end{cases}
\]
A direct verification shows that \(A(\boldsymbol x\boldsymbol y)=A(\boldsymbol x)\triangle A(\boldsymbol y)\) for all \(\boldsymbol x,\boldsymbol y\in G\), confirming that this mapping constitutes a group isomorphism \(G\cong H\). Throughout the paper, we shall freely interchange between these two representations.

For \(\boldsymbol z\in G\), we introduce the coordinate sum \(S\) and the full product \(P\):
\[
S=S(\boldsymbol z)=\sum_{i=1}^{n-1}z_i,
\qquad
P=P(\boldsymbol z)=\prod_{i=1}^{n-1}z_i.
\]
Let \(d(\boldsymbol z)=|A(\boldsymbol z)|\) denote the Hamming distance from the even set \(A(\boldsymbol z)\) to the identity element \(\varnothing\in H\). This Hamming distance can be algebraically evaluated as
\begin{equation}\label{eq:dist}
d(\boldsymbol z)=\frac{n-S-P}{2}.
\end{equation}
Indeed, setting \(k=|R(\boldsymbol z)|=\frac{n-1-S}{2}\), it follows that \(P=(-1)^k\). Since \(A(\boldsymbol z)=R(\boldsymbol z)\) when \(k\) is even and \(A(\boldsymbol z)=R(\boldsymbol z)\cup\{n\}\) when \(k\) is odd, we naturally obtain
\[
d(\boldsymbol z)=|A(\boldsymbol z)|
=
k+\frac{1-P}{2}
=
\frac{n-1-S}{2}+\frac{1-P}{2}
=
\frac{n-S-P}{2}.
\]

\paragraph{Fourier notation.}
For each subset \(T\subseteq[n-1]\), let \(\chi_T(\boldsymbol z)=\prod_{i\in T}z_i\) denote the standard Walsh--Hadamard character of \(G\). We equip the space of real-valued functions on \(G\) with the normalized inner product
\[
\langle g,h\rangle=\EE_{\boldsymbol z\in G}[g(\boldsymbol z)h(\boldsymbol z)],
\]
and define the Fourier coefficient of a function \(h:G\to\mathbb R\) at \(T\) by \(\widehat h(T)=\EE[h\chi_T]\). The collection of characters \(\{\chi_T\}_{T\subseteq [n-1]}\) forms an orthonormal basis for this space~\cite{ODonnell}.

\paragraph{The pseudo-adjacency matrix.}
For a graph \(\Gamma=(V,E)\), a real symmetric matrix \(M\) indexed by \(V\) is a \emph{pseudo-adjacency matrix} of \(\Gamma\) if its diagonal entries vanish and \(M_{uv}=0\) whenever \(u\ne v\) and \(uv\notin E\). We now construct such a matrix for the distance graph \(\Gamma(n,\mathcal L)\). Let
\[
f(x)=\prod_{j=1}^{t}(x-2\ell_j).
\]
For any \(A,B\in H\), we define the kernel \(F(A,B)=f(|A\triangle B|)\), and introduce the matrix \(M\) indexed by \(H\) via
\begin{equation}\label{eq:pseudo-M}
M_{A,B}=F(A,B)-(-2)^t\left(\prod_{j=1}^{t}\ell_j\right)\ind_{A=B}.
\end{equation}
Since \(f(0)=(-2)^t\prod_j\ell_j\), the diagonal entries of \(M\) are identically zero. Furthermore, if \(A\ne B\) and \(|A\triangle B|\in\mathcal L\), the polynomial factor vanishes, yielding \(F(A,B)=0\). Consequently, \(M\) serves as a valid pseudo-adjacency matrix for \(\Gamma(n,\mathcal L)\).

\paragraph{The convolution operator and eigenvalues.}
The structural symmetry of \(M\) allows for a seamless spectral decomposition via Fourier characters. Transferring the action of \(M\) from \(H\) to the group \(G\), we observe that the matrix entry \(M_{x,y}\) depends solely on the group element \(xy\). Thus, there exists a function \(h_g:G\to\mathbb R\) such that
\[
M_{x,y}=h_g(xy), \quad \text{for all } x,y \in G.
\]
Under this setup, the action of \(M\) on any function \(\varphi:G\to\mathbb R\) is given by
\[
(M\varphi)(x)=\sum_{y\in G}h_g(xy)\varphi(y).
\]
Consequently, as an operator on the function space \(\mathbb{R}^{G}\), the pseudo-adjacency matrix \(M\) acts exactly as the group convolution operator with kernel \(h_g\). 

As \(G\) is a finite abelian group, the translation-invariant operator \(M\) is diagonalized by the characters \(\chi_T\). Specifically,
\[
(M\chi_T)(x)
=
\sum_{y\in G}h_g(xy)\chi_T(y)
=
\chi_T(x)\sum_{z\in G}h_g(z)\chi_T(z),
\]
where we substituted \(z=xy\) and utilized the self-inverse property \(x^{-1}=x\) in \(G\) alongside the multiplicative property \(\chi_T(xz)=\chi_T(x)\chi_T(z)\). Under our normalized Fourier transform, the eigenvalue associated with the character \(\chi_T\) is given by
\begin{equation}\label{eq:eigenvalues}
\lambda_T=2^{n-1}\widehat h_g(T).
\end{equation}
Crucially, \eqref{eq:eigenvalues} implies that the signs of the eigenvalues of \(M\) coincide exactly with the signs of the Fourier coefficients \(\widehat h_g(T)\).

To complete the setup, let \(u(\boldsymbol z)=S(\boldsymbol z)+P(\boldsymbol z)\) and \(c_j=n-4\ell_j\). By relation \eqref{eq:dist}, the distance-polynomial kernel evaluates to
\begin{equation}\label{eq:ft}
f_t(\boldsymbol z):=f(d(\boldsymbol z))
=2^{-t}\prod_{j=1}^{t}\bigl(c_j-u(\boldsymbol z)\bigr).
\end{equation}
It then follows from \eqref{eq:pseudo-M} that
\begin{equation}\label{eq:h-Fourier}
h_g(\boldsymbol z)=f_t(\boldsymbol z)-(-2)^t\left(\prod_{j=1}^{t}\ell_j\right)\ind_{\boldsymbol z=\mathbf 1}.
\end{equation}
The Fourier coefficients of the polynomial kernel \(f_t\) will be thoroughly analyzed via a recurrence relation in Section~\ref{sec:fourier}, before incorporating the diagonal correction to invoke the inertia bound in Section~\ref{sec:inertia}.

\section{Boundary-layer spectral localization}\label{sec:fourier}

Throughout this section, we fix the dimension parameter \(m=n-1\) and maintain the shorthand notation 
\[
u=S+P, \quad c_j=n-4\ell_j
\]
introduced in Section~\ref{sec:setup}. 
For each $0\le r\le t$, put
\[
f_r(\boldsymbol z)=2^{-r}\prod_{j=1}^{r}(c_j-u(\boldsymbol z)), \quad \text{with } f_0=1.
\]
By convention, any polynomial of negative degree is understood to be identically zero.

\begin{lemma}[Fourier recurrence]\label{lem:recurrence}
For every \(0\le r\le t-1\) and any character index \(T\subseteq[m]\), the Fourier coefficients satisfy the recurrence relation
\begin{equation}\label{eq:recur}
\widehat f_{r+1}(T)=
\frac{c_{r+1}}{2}\widehat f_r(T)
-
\frac12\left(
\sum_{i=1}^{m}\widehat f_r(T\triangle\{i\})
+
\widehat f_r(T\triangle[m])
\right).
\end{equation}
\end{lemma}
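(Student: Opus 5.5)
The recurrence follows by expanding the product one factor at a time and using the fact that multiplication by a character shifts Fourier coefficients. By definition,
\[
f_{r+1}=\tfrac12\,(c_{r+1}-u)\,f_r=\tfrac{c_{r+1}}{2}f_r-\tfrac12 Sf_r-\tfrac12 Pf_r .
\]
Since \(T\mapsto\widehat h(T)\) is linear, it suffices to compute \(\widehat{Sf_r}(T)\) and \(\widehat{Pf_r}(T)\) separately. The basic identity is that for any \(U\subseteq[m]\) and any \(h:G\to\mathbb R\),
\[
\widehat{\chi_U h}(T)=\EE[\chi_U h\chi_T]=\EE[h\,\chi_{U\triangle T}]=\widehat h(T\triangle U).
\]
This holds because \(\chi_U\chi_T=\chi_{U\triangle T}\), which in turn follows from \(z_i^2=1\).

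Apply this with \(U=\{i\}\), where \(z_i=\chi_{\{i\}}\), and sum over \(i\). Since \(S=\sum_{i=1}^m z_i\), this gives
\[
\widehat{Sf_r}(T)=\sum_{i=1}^m\widehat f_r(T\triangle\{i\}).
\]
Next apply it with \(U=[m]\). Since \(P=\chi_{[m]}\), this gives
\[
\widehat{Pf_r}(T)=\widehat f_r(T\triangle[m]).
\]
Substituting both expressions into the expansion of \(f_{r+1}\) yields \eqref{eq:recur} directly.

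The argument is elementary, and no step is a real obstacle. The points that need care are minor. The identity \(\chi_U\chi_T=\chi_{U\triangle T}\) relies on \(z_i^2=1\). The recurrence holds for every \(0\le r\le t-1\) because \(c_{r+1}\) is defined in that range. The base case \(f_0=1\) needs no special treatment: it simply has \(\widehat f_0(T)=\ind_{T=\varnothing}\).
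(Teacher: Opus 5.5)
Your proof is correct and is essentially the paper's argument. The paper also writes \(f_{r+1}=\tfrac12(c_{r+1}-u)f_r\), uses \(S\chi_T=\sum_{i}\chi_{T\triangle\{i\}}\) and \(P\chi_T=\chi_{T\triangle[m]}\), and takes inner products with \(\chi_T\); this is exactly your shift identity \(\widehat{\chi_U h}(T)=\widehat h(T\triangle U)\) applied with \(U=\{i\}\) and \(U=[m]\).
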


\begin{proof}
By definition, the linear recurrence for the kernel reads \(f_{r+1} = \frac{1}{2}(c_{r+1} - u)f_r\). To evaluate its Fourier transform, we observe that the sum operator \(S\) and the product operator \(P\) act on the character basis via combinatorial propagation:
\[
S\chi_T=\sum_{i=1}^{m}\chi_{T\triangle\{i\}},
\qquad
P\chi_T=\chi_{T\triangle[m]}.
\]
Taking the normalized inner product of \(f_{r+1}\) with \(\chi_T\) and substituting these character actions immediately yields \eqref{eq:recur}.
\end{proof}

\begin{lemma}[Support localization and degree bounds]\label{lem:support-degree}
Let \(0\le r\le t\) and assume \(n>2r+1\). Then the Fourier support of \(f_r\) is strictly localized within the boundary layers
\[
\{T\subseteq [m]: |T|\le r\}\cup\{T\subseteq [m]: |T|\ge m-r+1\}.
\]
Furthermore, the coefficients exhibit the following polynomial dependencies on \(n\):
\begin{enumerate}[label=\textup{(\roman*)}]
    \item If \(|T|=s\le r\), then \(\widehat f_r(T)\) is a polynomial in \(n\) of degree at most \(r-s\).
    \item If \(|T|=m-s\) with \(0\le s\le r-1\), then \(\widehat f_r(T)\) is a polynomial in \(n\) of degree at most \(r-1-s\).
\end{enumerate}
In both cases, the coefficients of these polynomials are themselves polynomial functions of the distance parameters \(\ell_1,\dots,\ell_r\).
\end{lemma}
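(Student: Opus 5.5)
We prove all claims at once by induction on \(r\), using the Fourier recurrence of \Cref{lem:recurrence}. The base case \(r=0\) is immediate: \(f_0=1\) has \(\widehat f_0(\varnothing)=1\), a polynomial of degree \(0\), and all other coefficients vanish. Statement (ii) is vacuous for \(r=0\). For the inductive step, assume the statements hold for \(f_r\) with \(r\le t-1\), and fix \(T\) with \(|T|=k\). The three terms of \eqref{eq:recur} involve \(T\), its neighbours \(T\triangle\{i\}\) (sizes \(k\pm1\)), and the complement \(T\triangle[m]\) (size \(m-k\)).

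\emph{Support.} If \(r+1<k<m-r\), then \(T\) and each \(T\triangle\{i\}\) have sizes strictly between \(r\) and \(m-r+1\). The complement has size \(m-k\), which lies in \([r+1,m-r-1]\) and hence also in that forbidden range. So every term on the right vanishes by the inductive hypothesis, and the support claim for \(f_{r+1}\) follows. The assumption \(n>2r+3\), i.e.\ \(m>2r+2\), keeps the low layers \(\{|T|\le r+1\}\) and the high layers \(\{|T|\ge m-r\}\) disjoint and separated. This separation guarantees that the neighbour and complement bookkeeping is unambiguous.

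\emph{Degrees, low case \(|T|=s\le r+1\).} I will use symmetry: \(f_r\) is invariant under permutations of \([m]\), so \(\widehat f_r(T)\) depends only on \(|T|\). Write \(a_r(s)\) for its value on the low layer and \(b_r(s)\) for its value at \(|T|=m-s\). The recurrence then becomes
\[
\widehat f_{r+1}(T)=\tfrac{c_{r+1}}2 a_r(s)-\tfrac12\bigl(s\,a_r(s-1)+(m-s)\,a_r(s+1)+b_r(s)\bigr).
\]
I bound the degree in \(n\) of each term, noting that \(c_{r+1}=n-4\ell_{r+1}\) and \(m-s\) both have degree \(1\):
\begin{itemize}
\item \(c_{r+1}a_r(s)\) has degree at most \(1+r-s\).
\item \(s\,a_r(s-1)\) has degree at most \(r-s+1\).
\item \((m-s)a_r(s+1)\) has degree at most \(1+r-s-1=r-s\), and it vanishes if \(s+1>r\).
\item \(b_r(s)\) has degree at most \(r-1-s\) when \(s\le r-1\), and is zero otherwise. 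Here the point is that the complement of a low set is a high set, whose coefficient has degree at most \(r-1-s\) by (ii).
\end{itemize}
All four are within \(r+1-s\), which gives (i).

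\emph{Degrees, high case \(|T|=m-s\) with \(s\le r\).} The same recurrence gives
\[
\widehat f_{r+1}(T)=\tfrac{c_{r+1}}2 b_r(s)-\tfrac12\bigl((m-s)\,b_r(s+1)+s\,b_r(s-1)+a_r(s)\bigr).
\]
This is the delicate step. The term \((m-s)b_r(s+1)\) has degree at most \(1+r-2-s=r-1-s\), and \(s\,b_r(s-1)\) has degree at most \(r-s\). The complement term \(a_r(s)\) has degree at most \(r-s\), which matches the target \(r-s\). The remaining danger is \(c_{r+1}b_r(s)\), which could reach degree \(r-s\) only via \(1+(r-1-s)\); this is also within the target. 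So every term is bounded by \(r-s\), as (ii) requires for \(r+1\). The case \(s=r\), where \(b_r(r)=0\), is covered by the same estimates.

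Polynomiality in \(\ell_1,\dots,\ell_{r+1}\) is automatic, since the recurrence only introduces \(c_{r+1}\), \(m\), and integer constants. The main obstacle, then, is the careful index bookkeeping near the edges of the layers: the cases \(s=0\), \(s=r\), and \(s=r+1\), where some neighbour terms must be shown to vanish rather than merely to have small degree. I would handle these by checking each edge case separately against the support statement.
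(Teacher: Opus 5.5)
Your proposal is correct and follows the paper's proof essentially step for step. It is an induction on \(r\) via the recurrence \eqref{eq:recur}, with the same term-by-term degree accounting on the low layers and on the high layers, where the complement term \(a_r(s)\) and the unshifted term \(c_{r+1}b_r(s)\) both reach exactly the target degree \(r-s\), and the same vanishing argument for the middle layers. Your explicit use of permutation invariance to write the coefficients as layer functions \(a_r(s)\), \(b_r(s)\) is a small but worthwhile tightening, which the paper leaves implicit: it makes ``summing over the \(m-s\) neighbours raises the degree by one'' a statement about a single polynomial in \(n\).
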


\begin{proof}
We proceed by induction on \(r\). For \(r=0\), the assertion holds trivially since \(\widehat f_0(\varnothing)=1\) and all other coefficients vanish. Suppose the hypothesis holds up to some \(r\), and assume \(n>2(r+1)+1\). We examine the structural propagation in \eqref{eq:recur} for \(f_{r+1}\).

First, consider a low-frequency layer where \(|T|=s\le r+1\). The unshifted term \(\frac{1}{2}c_{r+1}\widehat f_r(T)\), when nonzero, contributes a polynomial of degree at most \((r-s)+1 = r+1-s\). For the sum component, indices \(i\in T\) shift the frequency to layer \(s-1\), preserving the degree bound via \((r-(s-1)) = r+1-s\); indices \(i\notin T\) shift the frequency to layer \(s+1\), and summing over the \(m-s\) remaining coordinates yields a total degree contribution of at most \((r-(s+1)) + 1 = r-s\). Finally, the complementary product term \(\widehat f_r(T\triangle[m])\) addresses a high-layer character of size \(m-s\), which by the induction hypothesis is bounded by degree \(r-1-s\). Accumulating these terms confirms that the low-layer coefficient \(\widehat f_{r+1}(T)\) has degree at most \((r+1)-s\).

Second, consider a high-frequency layer where \(|T|=m-s\) for \(0\le s\le r\). The unshifted term \(\frac{1}{2}c_{r+1}\widehat f_r(T)\) yields a degree of at most \((r-1-s)+1 = r-s\). For the sum term, coordinates \(i\notin T\) map to size \(m-(s-1)\), and aggregation across these \(s\) positions maintains a degree bounded by \((r-1-(s-1)) = r-s\). Coordinates \(i\in T\) map to a higher-order layer and yield strictly lower degree terms. The complementary term \(T\triangle[m]\) lands on the low layer of size \(s\), contributing a polynomial of degree at most \(r-s\). This establishes the desired high-layer degree bound for \(f_{r+1}\).

Lastly, if the character index satisfies \(r+1<|T|<m-r\), then \(T\), its adjacent neighbors \(T\triangle\{i\}\), and its complement \(T\triangle[m]\) all strictly avoid the admissible support layers of \(f_r\). Thus, every term on the right-hand side of \eqref{eq:recur} vanishes identically, completing the inductive step.
\end{proof}

We are now positioned to establish the main boundary-layer localization theorem and explicitly compute the dominant coefficients governing the spectral boundary.

\begin{theorem}[Fourier boundary-layer localization]\label{thm:fourier}
Let \(t\ge1\), let \(\ell_1,\dots,\ell_t\) be distinct positive integers, and assume \(n>2t+1\). The Fourier coefficients of the distance kernel \(f_t\) defined in \eqref{eq:ft} satisfy the following exact algebraic profiles:
\begin{enumerate}[label=\textup{(\roman*)}]
\item For a low-layer character with \(|T|=s\) where \(0\le s\le t\):
\begin{equation}\label{eq:small}
\widehat f_t(T)=
\frac{(-1)^s}{2^t}\frac{t!}{(t-s)!}\,n^{t-s}
+R_{t,s}(n),
\end{equation}
where \(R_{t,s}(n)\) is a polynomial in \(n\) of degree at most \(t-s-1\). In the extremal case \(s=t\), the remainder satisfies \(R_{t,t}=0\).

\item For a high-layer character with \(|T|=m-s\) where \(0\le s\le t-1\):
\begin{equation}\label{eq:large}
\widehat f_t(T)=
\frac{(-1)^{s+1}}{2^t}\frac{t!}{(t-s-1)!}\,n^{t-1-s}
+R'_{t,s}(n),
\end{equation}
where \(R'_{t,s}(n)\) is a polynomial in \(n\) of degree at most \(t-s-2\). In the extremal case \(s=t-1\), the remainder satisfies \(R'_{t,t-1}=0\).

\item For all intermediate layers with \(t<|T|<m-t+1\), the coefficients vanish identically: \(\widehat f_t(T)=0\).
\end{enumerate}
The coefficients of the remainder polynomials \(R_{t,s}\) and \(R'_{t,s}\) depend polynomially on \(\ell_1,\dots,\ell_t\).
\end{theorem}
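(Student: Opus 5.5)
We argue by induction on \(r\) using the recurrence of \Cref{lem:recurrence}, proving the analogous statement for every \(f_r\), \(0\le r\le t\), with \(t\) replaced by \(r\) and under the standing assumption \(n>2t+1\). Part (iii) is already the support statement of \Cref{lem:support-degree}. The polynomial dependence of the remainders on \(\ell_1,\dots,\ell_t\) is part of that lemma too. So the real content is identifying the leading coefficients, together with the exactness claims \(R_{r,r}=0\) and \(R'_{r,r-1}=0\). Write \(a_{r,s}\) for the coefficient of \(n^{r-s}\) in \(\widehat f_r(T)\) with \(|T|=s\), and \(b_{r,s}\) for the coefficient of \(n^{r-1-s}\) in \(\widehat f_r(T)\) with \(|T|=m-s\). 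By symmetry of \(f_r\) under coordinate permutations, these depend only on \(s\). The base case is \(a_{0,0}=1\), with no high layer.

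For the low layer \(|T|=s\), we expand \eqref{eq:recur} and keep only the terms of degree exactly \(r+1-s\). The unshifted term contributes \(\tfrac{n}{2}a_{r,s}\), since \(c_{r+1}=n-4\ell_{r+1}\). The \(s\) downward shifts \(T\setminus\{i\}\) contribute \(-\tfrac{s}{2}a_{r,s-1}\). The \(m-s=n-1-s\) upward shifts contribute \(-\tfrac{n}{2}a_{r,s+1}\), using \(\deg=r-s-1\) times the factor \(n\). The complement term has degree at most \(r-1-s\) by \Cref{lem:support-degree}, so it does not contribute. This gives
\[
a_{r+1,s}=\tfrac12\bigl(a_{r,s}-a_{r,s+1}-s\,a_{r,s-1}\bigr).
\]
We then verify that \(a_{r,s}=\frac{(-1)^s}{2^r}\frac{r!}{(r-s)!}\) solves this recurrence. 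Dividing by \((-1)^s r!/2^{r+1}\), the identity to check is
\[
\frac{1}{(r-s)!}+\frac{1}{(r-s-1)!}+\frac{s}{(r-s+1)!}=\frac{(r+1)}{(r+1-s)!}.
\]
Clearing denominators, the left side becomes \((r+1-s)+(r+1-s)(r-s)+s\) over \((r+1-s)!\). Since \((r+1-s)(r-s)=(r-s)^2+(r-s)\), this is \((r+1)(r+1-s)\) only after rechecking; if the bookkeeping of the factor \(s\) (downshift count) versus \(m-s\) is off, we fix it here. This is the one computational check.

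For \(s=r+1\), only the downward shifts survive, because \(\widehat f_r\) vanishes at sizes \(r+1\) and \(r+2\). The complement of a set of size \(r+1\) has size \(m-r-1\), which is outside the support since \(n>2t+1\). Hence \(\widehat f_{r+1}(T)=-\tfrac{r+1}{2}\widehat f_r(T')\) exactly. By induction \(\widehat f_r(T')\) is constant with no remainder, which gives \(R_{r+1,r+1}=0\).

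For the high layer \(|T|=m-s\), the same expansion applies, but the complement term now contributes at top degree. The unshifted term gives \(\tfrac n2 b_{r,s-1}\)-type contributions and the upward shifts \(T\cup\{i\}\) (there are \(s\) of them) give \(-\tfrac s2 b_{r,s-1}\). The downward shifts have \(m-s\) choices times degree \(r-2-s\), and the complement gives \(-\tfrac12 a_{r,s}\). This yields a recurrence of the form
\[
b_{r+1,s}=\tfrac12\bigl(b_{r,s}-b_{r,s+1}-s\,b_{r,s-1}\bigr)-\tfrac12 a_{r,s},
\]
and we check the claimed closed form against it. For the extremal case \(s=r\), the term \(c\,\widehat f_r\) vanishes because that layer lies outside the support of \(f_r\). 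Only \(-\tfrac12\widehat f_r(T\triangle[m])=-\tfrac12 a_{r,r}\) and possibly upward-shift terms remain, and these are constants. This gives \(R'_{r+1,r}=0\) with value \(-\tfrac12\cdot\frac{(-1)^r r!}{2^r}=\frac{(-1)^{r+1}(r+1)!}{2^{r+1}\,0!}\cdot\frac1{r+1}\), to be reconciled with \eqref{eq:large}.

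The main obstacle is precisely the careful degree bookkeeping of the high-layer recurrence: deciding which neighbors land in the support, and tracking the \(n\)-linear factors coming from the \(m-s\) shift count. We expect the closed forms to drop out once the two leading-coefficient recurrences are written correctly, so we will first verify them numerically for \(r=1,2\) before doing the general binomial identity.
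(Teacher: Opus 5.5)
Your overall strategy is the paper's: induction on $r$ via \eqref{eq:recur}, keeping only top-degree terms, with (iii) and the $\ell_j$-dependence inherited from \Cref{lem:support-degree}. Your exact treatment of the low-layer case $s=r+1$ is also correct. The gap is in the degree bookkeeping that decides which terms reach top degree. It breaks the leading-coefficient computation, which you yourself identify as the real content of the theorem.

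On the low layer $|T|=s$ the target degree is $r+1-s$. The $m-s$ upward shifts $T\cup\{i\}$ give $n-1-s$ times a polynomial of degree at most $r-s-1$, hence degree at most $r-s$, one below the top. So $-a_{r,s+1}$ does not belong in the recurrence. If you keep it, your identity clears to $(r+1-s)+(r+1-s)(r-s)+s=r+1$, i.e.\ $(r+1-s)(r-s)=0$. This is false for every $s\le r-1$, so the closed form genuinely does not solve the recurrence you wrote.

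The same slip recurs on the high layer. You note yourself that the $m-s$ downward shifts $T\setminus\{i\}$ have degree $r-2-s$ each, hence total degree $r-1-s<r-s$. Yet you keep $-b_{r,s+1}$, and the check then fails for all $s\le r-2$.

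In the exact case $s=r$ you also left the $r$ upward shifts $T\cup\{i\}$ out of the computed value. These land in layer $m-(r-1)$, where by induction $\widehat f_r=b_{r,r-1}=(-1)^r r!/2^r$ exactly. The omitted term $-\tfrac{r}{2}b_{r,r-1}$ is precisely the factor $r+1$ you could not reconcile.

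With the bookkeeping fixed, only three kinds of term survive at top degree: the unshifted term, the $s$ shifts toward the nearer boundary, and, on the high layer, the complement term. This gives
\[
a_{r+1,s}=\tfrac12 a_{r,s}-\tfrac{s}{2}a_{r,s-1}\quad(0\le s\le r+1),\qquad
b_{r+1,s}=\tfrac12 b_{r,s}-\tfrac{s}{2}b_{r,s-1}-\tfrac12 a_{r,s}\quad(0\le s\le r),
\]
with the conventions $a_{r,r+1}=b_{r,r}=0$. The extreme cases $s=r+1$ and $s=r$ hold exactly, not just at leading order, since every other term lands outside the support of $f_r$.

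The closed forms then follow from
\[
\frac{1}{(r-s)!}+\frac{s}{(r+1-s)!}=\frac{r+1}{(r+1-s)!},\qquad
\frac{1}{(r-s-1)!}+\frac{s+1}{(r-s)!}=\frac{r+1}{(r-s)!}.
\]
This is exactly the paper's computation. As written, however, your proposal does not establish the leading coefficients.
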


\begin{proof}
The vanishing property (iii) and the degree constraints on the remainders follow directly from Lemma~\ref{lem:support-degree}. It suffices to evaluate the leading coefficients by induction on the number of polynomial factors \(r\) for \(r=1,\dots,t\). For this purpose, we define the target leading coefficients on the low layers (\(|T|=s\)) and high layers (\(|T|=m-s\)) respectively by
\[
A_{r,s}=\frac{(-1)^s}{2^r}\frac{r!}{(r-s)!} \quad (0\le s\le r), \qquad B_{r,s}=\frac{(-1)^{s+1}}{2^r}\frac{r!}{(r-s-1)!} \quad (0\le s\le r-1),
\]
where coefficients outside these specified index ranges are treated as zero.

For the base case \(r=1\), setting \(\ell_1=\ell\) yields the kernel \(f_1(\boldsymbol z)=\frac{1}{2}(n-4\ell-u)\). Direct expansion yields the Fourier coefficients
\[
\widehat f_1(\varnothing)=\frac n2-2\ell,
\qquad
\widehat f_1(\{i\})=-\frac12 \quad (1\le i\le m),
\qquad
\widehat f_1([m])=-\frac12,
\]
with all other coefficients vanishing, matching the definitions of \(A_{1,s}\) and \(B_{1,s}\).

Now, assume the leading-term formulas hold for \(f_r\). Let \(|T|=s\le r+1\). According to the recurrence \eqref{eq:recur} and the degree bounds from Lemma~\ref{lem:support-degree}, the only terms capable of generating the maximal degree \(r+1-s\) are \(\frac{1}{2}c_{r+1}\widehat f_r(T)\) and the \(s\) neighboring terms arising from \(i\in T\). Consequently, the leading low-layer coefficient for \(f_{r+1}\) satisfies the relation
\[
\begin{cases}
\frac12A_{r,s}-\frac{s}{2}A_{r,s-1}, & 0\le s\le r,\\
-\frac{r+1}{2}A_{r,r}, & s=r+1.
\end{cases}
\]
An explicit algebraic verification confirms that
\[
\frac12A_{r,s}-\frac{s}{2}A_{r,s-1} = \frac{(-1)^s}{2^{r+1}}\frac{(r+1)!}{(r+1-s)!}=A_{r+1,s},
\]
and similarly \(-\frac{r+1}{2}A_{r,r} = \frac{(-1)^{r+1}}{2^{r+1}}(r+1)!=A_{r+1,r+1}\), validating the low-frequency profile for \(f_{r+1}\).

Next, let \(|T|=m-s\) for \(0\le s\le r\), where the maximal possible degree is \(r-s\). Invoking \eqref{eq:recur} and Lemma~\ref{lem:support-degree}, the dominant contributions are driven by \(\frac{1}{2}c_{r+1}\widehat f_r(T)\) (for \(s\le r-1\)), the \(s\) sum-induced terms where \(i\notin T\), and the complement block \(T\triangle[m]\) of size \(s\). The leading high-layer coefficient for \(f_{r+1}\) is thus given by
\[
\begin{cases}
\frac12B_{r,s}-\frac{s}{2}B_{r,s-1}-\frac12A_{r,s}, & 0\le s\le r-1,\\
-\frac r2B_{r,r-1}-\frac12A_{r,r}, & s=r.
\end{cases}
\]
Evaluating these expressions yields
\[
\frac12B_{r,s}-\frac{s}{2}B_{r,s-1}-\frac12A_{r,s} = \frac{(-1)^{s+1}}{2^{r+1}}\frac{(r+1)!}{(r-s)!}=B_{r+1,s},
\]
and \(-\frac r2B_{r,r-1}-\frac12A_{r,r} = \frac{(-1)^{r+1}}{2^{r+1}}(r+1)!=B_{r+1,r}\). This establishes the high-frequency formula for \(f_{r+1}\) and completes the induction.
\end{proof}

\begin{remark}\label{rem:mechanism}
The leading boundary-layer signs are universal. More precisely, although the kernel \(f_t\) is built from the prescribed distances \({2\ell_1,\dots,2\ell_t}\), these parameters enter the boundary-layer Fourier coefficients only through lower-degree remainder terms. The leading coefficients depend solely on \(t\) and the layer index \(T\), which is the mechanism behind the uniform spectral sign pattern for all fixed even distance sets of size \(t\).
\end{remark}

\section{The inertia argument and extremal consequences}\label{sec:inertia}

In this section, we deploy the algebraic machinery of spectral inertia to translate the boundary-layer Fourier localization established in Section~\ref{sec:fourier} into sharp extremal bounds. We begin by recalling Cvetkovi\'c's classical inertia bound~\cite{Cvetkovic}, a pivotal tool linking the independence number of a graph to the spectrum of its pseudo-adjacency matrices; see also~\cite{Brouwer-Haemers}.

\begin{proposition}[Cvetkovi\'c inertia bound]\label{prop:inertia}
Let \(\Gamma=(V,E)\) be a graph and let \(M\) be a real symmetric
pseudo-adjacency matrix of \(\Gamma\).  Let \(n_{\ge0}(M)\) and \(n_{\le0}(M)\)
denote the numbers of non-negative and non-positive eigenvalues of \(M\), counted
with multiplicity.  Then
\[
\alpha(\Gamma)\le \min\{n_{\ge0}(M),n_{\le0}(M)\}.
\]
\end{proposition}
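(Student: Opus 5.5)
The statement to prove is Cvetković's inertia bound: if \(M\) is a real symmetric pseudo-adjacency matrix of \(\Gamma\), then \(\alpha(\Gamma)\le\min\{n_{\ge0}(M),n_{\le0}(M)\}\). The plan is the standard interlacing-free argument via the Courant--Fischer characterization, phrased as a dimension count. Fix a maximum independent set \(I\subseteq V\) with \(|I|=\alpha=\alpha(\Gamma)\). Consider the coordinate subspace \(W=\{x\in\mathbb R^V: x_v=0 \text{ for } v\notin I\}\), which has dimension \(\alpha\). The key observation is that the principal submatrix \(M[I,I]\) vanishes identically. Its diagonal entries are zero by definition of a pseudo-adjacency matrix, and for distinct \(u,v\in I\) we have \(uv\notin E\), so \(M_{uv}=0\). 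Consequently the quadratic form satisfies \(x^{\top}Mx=0\) for every \(x\in W\).

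Next, diagonalize \(M\) with an orthonormal eigenbasis, and let \(U_{<0}\) be the span of eigenvectors with negative eigenvalues. Its dimension is \(n_{<0}(M)=|V|-n_{\ge0}(M)\). On \(U_{<0}\setminus\{0\}\) the form is strictly negative, so \(W\cap U_{<0}=\{0\}\). Then \(\dim W+\dim U_{<0}\le |V|\), which gives \(\alpha\le n_{\ge0}(M)\).

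Applying the same argument to \(-M\), which is again a pseudo-adjacency matrix of \(\Gamma\), or equivalently using the span \(U_{>0}\) of the positive eigenvectors, yields \(\alpha\le n_{\le0}(M)\). Combining the two inequalities gives the claimed minimum.

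There is no real obstacle here. The only point needing care is that one must intersect \(W\) with the strictly negative (or strictly positive) eigenspace, not the nonpositive one. This is why zero eigenvalues are counted on both sides, giving \(n_{\ge0}\) and \(n_{\le0}\) rather than strict counts.
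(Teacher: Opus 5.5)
Your proof is correct. It differs from the paper's only in how the vanishing principal submatrix \(M[I,I]=0\) is turned into an eigenvalue count.

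The paper invokes Cauchy interlacing. With eigenvalues ordered \(\lambda_1\ge\dots\ge\lambda_{|V|}\), a zero \(k\times k\) principal submatrix forces \(\lambda_k(M)\ge 0\ge\lambda_{|V|-k+1}(M)\). This yields both inequalities in one stroke.

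You instead argue directly. The coordinate subspace \(W\) supported on \(I\) satisfies \(x^{\top}Mx=0\) for all \(x\in W\), so it meets the strictly negative eigenspace only in \(0\). A dimension count then gives \(\alpha\le n_{\ge0}(M)\). The second inequality follows by passing to \(-M\), which is again a pseudo-adjacency matrix and satisfies \(n_{\ge0}(-M)=n_{\le0}(M)\).

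What your route buys is self-containedness. It reproves, via the Courant--Fischer dimension count, exactly the special case of interlacing that is needed here. It also makes transparent why zero eigenvalues are counted on both sides, as you point out. The paper's route is shorter because it hands precisely this step to a standard theorem. Both suffice for the application in the main proof.
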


\begin{proof}
Let \(I\subseteq V\) be an independent set of size \(k\).  By the definition of a
pseudo-adjacency matrix, the principal submatrix \(M[I,I]\) is the \(k\times k\)
zero matrix.  Cauchy's interlacing theorem implies that \(M\) has at least \(k\)
non-negative eigenvalues and at least \(k\) non-positive eigenvalues.  Thus
\[
k\le \min\{n_{\ge0}(M),n_{\le0}(M)\}.
\]
Taking the maximum over all independent sets proves the result.
\end{proof}

We are now ready to establish our main theorem.

\begin{proof}[Proof of \Cref{thm:main}]
A family \(\mathcal F\subseteq H\) with all pairwise distances restricted to \(\mathcal L\) is precisely an independent set in the distance graph \(\Gamma(n,\mathcal L)\). We apply the inertia bound (\Cref{prop:inertia}) to the pseudo-adjacency matrix \(M\) constructed in \eqref{eq:pseudo-M}.

By \eqref{eq:h-Fourier}, the Fourier coefficients of \(h_g\) evaluate to
\begin{equation}\label{eq:hhat}
\widehat h_g(T)=
\widehat f_t(T)-
\frac{(-2)^t\prod_{j=1}^{t}\ell_j}{2^{n-1}}.
\end{equation}
For all sufficiently large \(n\), \Cref{thm:fourier} combined with the exponentially small correction in \eqref{eq:hhat} yields the following sign pattern:
\[
\begin{array}{c|c}
\text{Frequency Layer} & \text{Sign of }\widehat h_g(T)\\
\midrule
|T|=s\le t & (-1)^s\\
|T|=m-s, \quad 0\le s\le t-1 & (-1)^{s+1}\\
t<|T|<m-t+1 & (-1)^{t+1}.
\end{array}
\]
Indeed, the signs in the first two rows are determined by the leading polynomial terms identified in \Cref{thm:fourier}. For the intermediate layers in the third row, \(\widehat f_t(T)=0\) identically, implying that the sign is solely driven by the trailing constant \(-(-2)^t\prod_j\ell_j\), which simplifies to \((-1)^{t+1}\).

If \(t\) is odd, the negative eigenspaces correspond to the low layers with odd \(s\) and the high layers with even \(s\); if \(t\) is even, the positive eigenspaces correspond to the low layers with even \(s\) and the high layers with odd \(s\). 
In both scenarios, the multiplicity of this specific sign class equals
\[
\sum_{\nu=0}^{t}\binom{m}{\nu}.
\]
Since \(m=n-1>2t\), we have \(\sum_{\nu=0}^{t}\binom{m}{\nu}\le 2^{m-1}\), ensuring that the complementary sign class has at least this multiplicity. Consequently, the minimum eigenvalue count satisfies
\[
\min\{n_{\ge0}(M),n_{\le0}(M)\}
=
\sum_{\nu=0}^{t}\binom{m}{\nu}
=
\sum_{\nu=0}^{t}\binom{n-1}{\nu}.
\]
Cvetkovi\'{c}'s inertia bound then yields the desired upper bound on \(|\mathcal F|\).

To demonstrate sharpness for the specific distance set \(\mathcal L_0=\{2,4,\dots,2t\}\), we construct an optimal family as follows. If \(t\) is even, we select the parity Hamming ball consisting of all even subsets of \([n]\) of size at most \(t\). If \(t\) is odd, we take the image under the parity-reversing involution \(A \mapsto A \triangle \{n\}\) of all odd subsets of \([n]\) of size at most \(t\). In both scenarios, the resulting family is strictly contained within \(H\), and the symmetric difference of any two distinct elements yields a positive even integer bounded above by \(2t\). By Pascal's identity, the cardinality of this family matches \(\sum_{i=0}^{t}\binom{n-1}{i}\) exactly, completing the proof.
\end{proof}

This fundamental upper bound on the even subcube immediately propagates to several adjacent settings. First, we record the dual result on the odd subcube.

\begin{corollary}[Odd-subcube analogue]\label{cor:odd}
Fix distinct positive integers \(\ell_1,\dots,\ell_t\), and let \(H'=\{A\subseteq[n]: |A|\text{ is odd}\}\) be the odd subcube. There exists a threshold \(n_0=n_0(\ell_1,\dots,\ell_t)\) such that, for all \(n\ge n_0\), any family \(\mathcal F\subseteq H'\) satisfying \(|A\triangle B|\in\{2\ell_1,\dots,2\ell_t\}\) for all distinct \(A,B\in\mathcal F\) obeys the bound
\[
|\mathcal F|\le \sum_{i=0}^{t}\binom{n-1}{i}.
\]
Moreover, this bound is sharp for the distance set \(\{2,4,\dots,2t\}\).
\end{corollary}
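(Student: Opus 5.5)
The plan is to reduce the corollary to \Cref{thm:main} through the parity-reversing involution \(\sigma(A)=A\triangle\{n\}\). This map is a bijection from the odd subcube \(H'\) onto the even subcube \(H\), since adding or removing the element \(n\) changes \(|A|\) by exactly one. The key observation is that \(\sigma\) preserves symmetric differences: for any \(A,B\in H'\),
\[
\sigma(A)\triangle\sigma(B)=(A\triangle\{n\})\triangle(B\triangle\{n\})=A\triangle B .
\]
It is therefore an isometry of the Hamming metric.

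Given a family \(\mathcal F\subseteq H'\) with \(|A\triangle B|\in\{2\ell_1,\dots,2\ell_t\}\) for all distinct \(A,B\in\mathcal F\), set \(\sigma(\mathcal F)=\{\sigma(A):A\in\mathcal F\}\subseteq H\). This family has the same cardinality, because \(\sigma\) is injective. By the isometry, it also has exactly the same set of pairwise distances. Applying \Cref{thm:main} with \(n_0(\ell_1,\dots,\ell_t)\) taken to be the threshold \(n_0(\mathcal L)\) from that theorem gives \(|\mathcal F|=|\sigma(\mathcal F)|\le\sum_{i=0}^{t}\binom{n-1}{i}\) for all \(n\ge n_0\).

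For sharpness, apply \(\sigma^{-1}=\sigma\) to the extremal family in \(H\) built in the proof of \Cref{thm:main} for \(\mathcal L_0=\{2,4,\dots,2t\}\). Since \(\sigma\) is an isometry, the image is a family in \(H'\) of the same size \(\sum_{i=0}^{t}\binom{n-1}{i}\), with all pairwise distances in \(\mathcal L_0\). Alternatively, one can write the family down directly. If \(t\) is odd, take all odd subsets of \([n]\) of size at most \(t\). If \(t\) is even, take the \(\sigma\)-image of all even subsets of size at most \(t\). Either way, Pascal's identity gives the count.

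There is no genuine obstacle here. The only point needing care is the isometry identity above, because it is what guarantees that distances, and hence the constraint set, are preserved exactly. The threshold \(n_0\) carries over unchanged from \Cref{thm:main}.
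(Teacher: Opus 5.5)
Your proposal is correct and is essentially the paper's proof: the involution $A\mapsto A\triangle\{n\}$ is a distance-preserving bijection $H'\to H$, so \Cref{thm:main} transfers directly with the same threshold $n_0$. Your explicit sharpness construction (odd sets of size at most $t$ when $t$ is odd, the image of the even parity ball when $t$ is even) is a useful addition, since the paper leaves the pull-back of its extremal family implicit.
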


\begin{proof}
The parity-reversing involution \(\Phi(A) = A\triangle\{n\}\) defines a rigorous bijection from \(H'\) onto \(H\). Because \(\Phi\) preserves the underlying Hamming distances identically, the independent sets in the odd subcube map directly to independent sets of the same size in the even subcube. The result follows immediately from \Cref{thm:main}.
\end{proof}

Next, we handle arbitrary, mixed-parity distance constraints via parity decomposition and higher-dimensional liftings.

\begin{corollary}[Parity reduction for arbitrary distance sets]\label{cor:general}
Let \(\mathcal L\) be a finite set of positive integers, and write
\[
\mathcal L_{\mathrm{even}}=\mathcal L\cap2\NN,
\qquad
\mathcal L_{\mathrm{odd}}=\mathcal L\setminus\mathcal L_{\mathrm{even}},
\qquad
 e=|\mathcal L_{\mathrm{even}}|,
\qquad
\mathcal{L}_{\mathrm{odd}}+1=\{\ell+1:\ell\in \mathcal{L}_{\mathrm{odd}} \}.
\]
For all sufficiently large \(n\), any family \(\mathcal F\subseteq2^{[n]}\) with pairwise distances restricted to \(\mathcal L\) satisfies
\[
|\mathcal F|\le
\begin{cases}
\displaystyle \sum_{i=0}^{e}\binom{n-1}{i}, & \text{if } \mathcal L\subseteq2\NN,\\[12pt]
\displaystyle 2\sum_{i=0}^{e}\binom{n-1}{i}, & \text{if } \mathcal L\not\subseteq2\NN.
\end{cases}
\]
Moreover, if \(\mathcal L\not\subseteq2\NN \text{ and } \mathcal L_{\mathrm{odd}}+1\subseteq\mathcal L_{\mathrm{even}}\), then  the following tighter bound holds:
\[
|\mathcal F|\le\sum_{i=0}^{e}\binom{n}{i}.
\]
\end{corollary}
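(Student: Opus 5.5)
We prove the three bounds by splitting \(\mathcal F\) by parity and applying \Cref{thm:main} and \Cref{cor:odd} to each half. Write \(\mathcal F=\mathcal F_0\cup\mathcal F_1\), where \(\mathcal F_0=\mathcal F\cap H\) and \(\mathcal F_1=\mathcal F\cap H'\).

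\textbf{The two crude bounds.} Inside each \(\mathcal F_p\), all pairwise distances are even. They therefore lie in \(\mathcal L_{\mathrm{even}}=\{2\ell_1,\dots,2\ell_e\}\). If \(e\ge1\), \Cref{thm:main} bounds \(|\mathcal F_0|\) and \Cref{cor:odd} bounds \(|\mathcal F_1|\), each by \(\sum_{i\le e}\binom{n-1}{i}\). If \(e=0\), each \(\mathcal F_p\) has no admissible distances, so \(|\mathcal F_p|\le1=\binom{n-1}{0}\).

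If \(\mathcal L\subseteq2\NN\), two sets of opposite parity would be at odd distance, which is forbidden. So either \(\mathcal F_0\) or \(\mathcal F_1\) is empty, and we get the first bound. Otherwise we simply add the two bounds, giving \(2\sum_{i\le e}\binom{n-1}{i}\). We take \(n\) larger than the thresholds \(n_0\) of \Cref{thm:main} and \Cref{cor:odd} applied to \(\mathcal L_{\mathrm{even}}\).

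\textbf{The tighter bound via lifting.} Now assume \(\mathcal L_{\mathrm{odd}}+1\subseteq\mathcal L_{\mathrm{even}}\). We embed \(2^{[n]}\) into the even subcube of \(2^{[n+1]}\) by the map \(A\mapsto\tilde A\), where \(\tilde A=A\) if \(|A|\) is even and \(\tilde A=A\cup\{n+1\}\) if \(|A|\) is odd. This map is injective. Its distances are
\[
|\tilde A\triangle\tilde B|=|A\triangle B|+\ind\{|A\triangle B|\text{ odd}\}.
\]
Hence pairwise distances in \(\tilde{\mathcal F}\) lie in \(\mathcal L_{\mathrm{even}}\cup(\mathcal L_{\mathrm{odd}}+1)=\mathcal L_{\mathrm{even}}\), a set of \(e\) even distances.

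Applying \Cref{thm:main} in dimension \(n+1\) gives
\[
|\mathcal F|=|\tilde{\mathcal F}|\le\sum_{i=0}^{e}\binom{n}{i}
\]
for \(n+1\ge n_0(\mathcal L_{\mathrm{even}})\). The hypothesis \(\mathcal L\not\subseteq2\NN\) together with the inclusion forces \(e\ge1\), so \Cref{thm:main} applies.

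\textbf{Expected obstacle.} The only delicate point is verifying this lifting: the map must be injective and the distance identity must hold in every parity case. Both are direct case checks. Note that \(\sum_{i\le e}\binom ni\le 2\sum_{i\le e}\binom{n-1}{i}\), so this bound is indeed tighter than the crude one.
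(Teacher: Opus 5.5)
Your proposal is correct and follows the paper's proof essentially step for step. It uses parity splitting with \Cref{thm:main} and \Cref{cor:odd} for the two crude bounds, and the same lift of \(A\) to \(A\) or \(A\cup\{n+1\}\) into the even subcube of \([n+1]\) for the tighter bound, with your additional checks (injectivity of the lift, and that \(\mathcal L_{\mathrm{odd}}\neq\varnothing\) together with the inclusion forces \(e\ge1\)) making explicit what the paper leaves implicit.
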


\begin{proof}
If \(\mathcal L\subseteq2\NN\), all admissible distances are strictly even, forcing all members of \(\mathcal F\) to share an identical parity. For \(e=0\), \(\mathcal F\) trivially contains at most one element; for \(e\ge1\), the first bound is a direct consequence of \Cref{thm:main} and \Cref{cor:odd}.

If \(\mathcal L\not\subseteq2\NN\), we decompose the family into its disjoint parity blocks: \(\mathcal F = \mathcal F_e \cup \mathcal F_o\), where \(\mathcal F_e = \mathcal F \cap H\) and \(\mathcal F_o = \mathcal F \cap H' \). Since pairwise distances within each isolated block must be even, they are strictly confined to \(\mathcal L_{\mathrm{even}}\). Applying the first case to \(\mathcal F_e\) and \(\mathcal F_o\) independently and summing the bounds yields the second estimate.

Finally, under the nested parity condition \(\mathcal L_{\mathrm{odd}}+1\subseteq\mathcal L_{\mathrm{even}}\), we map \(\mathcal F\) into the even subcube of a higher dimension \([n+1]\) via the canonical embedding:
\[
A^*=\begin{cases}
A, & \text{if } |A|\text{ is even},\\
A\cup\{n+1\}, & \text{if } |A|\text{ is odd}.
\end{cases}
\]
For any pair \(A,B\in\mathcal F\) of matching parity, \(|A^*\triangle B^*|=|A\triangle B|\in \mathcal L_{\mathrm{even}}\). For a pair of opposite parity, the embedding introduces an additional offset, yielding \(|A^*\triangle B^*|=|A\triangle B|+1 \in \mathcal L_{\mathrm{odd}}+1 \subseteq \mathcal L_{\mathrm{even}}\). Thus, the lifted family \(\mathcal F^*\) resides entirely in the even subcube of \([n+1]\) with distances restricted to \(\mathcal L_{\mathrm{even}}\). Applying \Cref{thm:main} in dimension \(n+1\) yields the third bound.
\end{proof}

By substituting \(\mathcal L=\{1,2,\dots,d\}\) into \Cref{cor:general},
we recover the classical Kleitman bounds, with the sharp threshold discussed in Section~\ref{sec:thresholds}. \Cref{cor:general}, applied to
\(\mathcal L=\{2s+1,\dots,2t\}\) and \(\mathcal L=\{2s+1,\dots,2t+1\}\), recovers the consecutive-distance theorem of Huang--Klurman--Pohoata
\cite[Theorem~1.2]{HKP}. We conclude by showing that our framework provides an efficient spectral proof of the maximality of the distance set \(\{2,4,\dots,2t\}\).

\begin{corollary}[Maximality of \(\{2,4,\dots,2t\}\)]\label{cor:D0-max}
Let \(D\) be an arbitrary fixed set of \(t\) positive integers, and let \(A(n,D)\) denote the maximum size of a family \(\mathcal F\subseteq2^{[n]}\) whose pairwise distances lie in \(D\). Setting \(D_0=\{2,4,\dots,2t\}\), then for all sufficiently large \(n\), we have
\[
A(n,D)\le A(n,D_0)=\sum_{i=0}^{t}\binom{n-1}{i}.
\]
Furthermore, if \(D\) contains at least one odd integer, the bound tightens asymptotically to
\[
A(n,D) \le 2\sum_{i=0}^{t-1}\binom{n-1}{i} = o(n^t),
\]
whereas the extremal configuration satisfies \(A(n,D_0)\sim \frac{1}{t!}n^t\).
\end{corollary}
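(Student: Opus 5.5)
The plan is to split according to the parity content of \(D\) and reduce each case to results already proved. First I pin down \(A(n,D_0)\). Any family with pairwise distances in \(D_0=\{2,4,\dots,2t\}\) has all pairwise distances even. So all its members share one parity and the family lies entirely in \(H\) or entirely in \(H'\). \Cref{thm:main} (respectively \Cref{cor:odd}) with \(\ell_j=j\) then gives \(A(n,D_0)\le\sum_{i=0}^{t}\binom{n-1}{i}\) for \(n\) large. The matching lower bound is the parity Hamming ball constructed in the sharpness part of the proof of \Cref{thm:main}; it is a family in \(2^{[n]}\) with distances in \(D_0\). Hence \(A(n,D_0)=\sum_{i=0}^{t}\binom{n-1}{i}\). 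The leading term is \(\binom{n-1}{t}\), so \(A(n,D_0)\sim n^t/t!\).

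Next, suppose \(D\subseteq 2\NN\). Write \(D=\{2\ell_1,\dots,2\ell_t\}\) with distinct \(\ell_j\). By the same parity observation, an extremal family lies in \(H\) or \(H'\). \Cref{thm:main} or \Cref{cor:odd} then yields \(A(n,D)\le\sum_{i=0}^{t}\binom{n-1}{i}=A(n,D_0)\) for \(n\ge n_0(D)\). Equivalently, this is the first case of \Cref{cor:general} with \(e=t\).

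Finally, suppose \(D\) contains an odd integer. Then \(e=|D\cap 2\NN|\le t-1\). Split \(\mathcal F=\mathcal F_e\cup\mathcal F_o\) by parity, as in the proof of \Cref{cor:general}; distances inside each block lie in \(D\cap2\NN\). If \(e\ge1\), the second case of \Cref{cor:general} gives \(|\mathcal F|\le 2\sum_{i=0}^{e}\binom{n-1}{i}\le 2\sum_{i=0}^{t-1}\binom{n-1}{i}\). If \(e=0\), each block has at most one element, so \(|\mathcal F|\le 2\), which is again within this bound. The right-hand side is \(O(n^{t-1})=o(n^t)\). Comparing with \(A(n,D_0)\ge\binom{n-1}{t}\), which has order \(n^t/t!\), I get \(A(n,D)<A(n,D_0)\) once \(n\) is large in terms of \(t\). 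Thus \(D_0\) is maximal in this case as well.

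I do not expect any serious obstacle, since the argument is bookkeeping on top of \Cref{thm:main} and \Cref{cor:general}. The points needing care are these:
\begin{itemize}
\item The degenerate case \(e=0\) is not covered verbatim by the second case of \Cref{cor:general}, so it must be handled directly as above.
\item The threshold \(n_0\) depends on \(D\), and the statement should make this explicit through the phrase ``for \(n\) sufficiently large''.
\item The final comparison \(2\sum_{i\le t-1}\binom{n-1}{i}<\sum_{i\le t}\binom{n-1}{i}\) holds for large \(n\) because \(\binom{n-1}{t}\) dominates the difference \(\sum_{i\le t-1}\binom{n-1}{i}\), which is \(O(n^{t-1})\).
\end{itemize}
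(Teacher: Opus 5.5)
Your proposal is correct and follows essentially the same route as the paper. Parity splitting reduces $D_0$ and the case $D\subseteq 2\NN$ to \Cref{thm:main} and \Cref{cor:odd}, with the parity ball giving the lower bound; when $D$ has an odd element you apply the second case of \Cref{cor:general} with $e\le t-1$ and compare growth rates. Your separate treatment of $e=0$ is harmless but not needed: the second case of \Cref{cor:general} already gives $|\mathcal F|\le 2$ there, and its proof covers $e=0$ through the first case.
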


\begin{proof}
For \(D_0=\{2,4,\dots,2t\}\), parity splitting reduces the problem to a
single parity class. Thus \Cref{thm:main} and \Cref{cor:odd} give the upper
bound, while the parity-ball construction in the proof of \Cref{thm:main}
attains it. Hence
\(
A(n,D_0)=\sum_{i=0}^{t}\binom{n-1}{i}.
\)
If \(D\subseteq2\NN\), then \(|D_{\mathrm{even}}| = t\), and \Cref{cor:general} yields \(A(n,D)\le \sum_{i=0}^{t}\binom{n-1}{i} = A(n,D_0)\). If \(D\) contains an odd integer, the size of its even restriction is strictly bounded by \(|D\cap2\NN|\le t-1\). Invoking the second case of \Cref{cor:general} gives \(A(n,D)\le2\sum_{i=0}^{t-1}\binom{n-1}{i}\). Since this is a polynomial in \(n\) of degree at most \(t-1\), it is asymptotically dominated by the \(n^t\) growth of \(A(n,D_0)\), establishing the proof.
\end{proof}

\begin{remark}\label{rem:DGLOZ-relation}
While \Cref{cor:D0-max} efficiently recovers the maximality statement of the Dong--Gao--Liu--Ouyang--Zhou theorem~\cite{DGLOZ} for fixed \(t\)-distance sets, our proof does not capture their refined uniqueness or gap phenomena. Instead, it provides a highly decoupled, spectral alternative that reaches the global maximum via a distinct, analytic route.
\end{remark}

\section{Thresholds and final remarks}\label{sec:thresholds}

The validity of our spectral framework hinges on two dimension requirements. 
First, the low and high boundary layers must be separated by a nonempty middle range \[ t<|T|<m-t+1, \] which is equivalent to \(m>2t\), or \(n>2t+1\).
Second, on the boundary layers, the leading terms in \Cref{thm:fourier} must dominate the lower-degree remainders and the exponentially small diagonal correction in \eqref{eq:hhat}. Since all boundary coefficients are governed by the explicit recurrence \eqref{eq:recur}, the requisite lower bound on \(n\) is computationally effective for each fixed distance set.

For the distance set
\[
\mathcal L_0=\{2,4,\dots,2t\},
\]
the threshold \(n>2t+1\) is sharp. Indeed, for \(x=2k\), the distance
kernel simplifies to
\[
f(x)=\prod_{j=1}^{t}(x-2j)=2^t t!\binom{k-1}{t}.
\]
Let \(M_k\) denote the distance-\(2k\) adjacency matrix on the even subcube.
For this distance set \(\mathcal L_0\), the pseudo-adjacency matrix \(M\)
defined in \eqref{eq:pseudo-M} expands as
\[
M=
2^t t!\sum_{k=t+1}^{\lfloor n/2\rfloor}
\binom{k-1}{t}M_k .
\]
Consequently, after removing the positive scalar \(2^t t!\), this is precisely
the spectral matrix used for the parity-ball bound in~\cite[Section~5.2]{DGLOZ}.
The explicit eigenvalue computation in that section shows that the sign
pattern required for the inertia argument holds precisely 
throughout the
range
\[
m=n-1>2t.
\]
This threshold cannot be lowered. At the endpoint \(m=2t\), every nonzero even distance in the even subcube belongs to
\(\{2,4,\dots,2t\}\). Hence the whole even subcube is admissible for
\(\mathcal L_0\), and has size \(2^{2t}\), which is larger than
\[
\sum_{i=0}^{t}\binom{2t}{i}.
\]
Thus the bound in \Cref{thm:main} is false at \(n=2t+1\), and the threshold
\(n>2t+1\) is best possible for this distance set.

When translated back to the classical Kleitman theorem through
\Cref{cor:general}, this condition yields the classical hypothesis \(n>d\).
Indeed, if the diameter is odd, \(d=2k+1\), the even-subcube threshold
\(n> 2k+1\) is exactly \(n>d\); if the diameter is even, \(d=2k\), the
lifted dimension condition \(n+1> 2k+1\) likewise reduces to \(n>d\).

\section*{Acknowledgments}
Xiaochen Zhao thanks Professor Zixiang Xu and Professor Hao Huang for helpful early discussions.

\bibliographystyle{plain}
\bibliography{references}
\end{document}